\newtheorem{mydef}{Definition}
\newtheorem{mylem}{Lemma}
\newtheorem{mythm}{Theorem}
\newtheorem{myrem}{Remark}
\newcommand{\rfig}[1]{Fig.\,\ref{#1}} 
\newcommand{\req}[1]{\eqref{#1}} 
\newcommand{\rtab}[1]{Table\,\ref{#1}}
\newcommand{\rlem}[1]{Lemma\,\ref{#1}}
\newcommand{\rrem}[1]{Remark\,\ref{#1}}
\newcommand{\rsec}[1]{Section\,\ref{#1}}
\newcommand{\rdef}[1]{Definition\,\ref{#1}}
\newcommand{\rthm}[1]{Theorem\,\ref{#1}}
\newcommand{\qedwhite}{\hfill \ensuremath{\Box}}
\begin{document}
\title{\LARGE \textbf{Self-triggered control for constrained systems: \\ a contractive set-based approach}}
\author{Kazumune Hashimoto, Shuichi Adachi, and Dimos V. Dimarogonas
\thanks{Kazumune Hashimoto and Shuichi Adachi are with Department of Applied Physics and Physico-Informatics, Keio University, Yokohama, Japan.}
\thanks{Dimos V. Dimarogonas is with the department of Electrical Engineering, KTH Royal Institute of Technology,  Stockholm, Sweden. His work was supported by the Swedish Research Council (VR), the Swedish Foundation for Strategic Research (SSF) and Knut och Alice Wallenberg foundation (KAW).
}}
\maketitle
\begin{abstract}
In this paper, a self-triggered control scheme for constrained discrete-time control systems is presented. The key idea of our approach is to construct a transition system or a graph structure from a collection of polyhedral sets, which are generated based on the notion of {set-invariance theory}. 
The inter-event time steps are then determined through a standard graph search algorithm to obtain the minimal total cost to a terminal state. The proposed strategy is illustrated through a numerical example. 
\end{abstract}
\section{Introduction}
Efficient network utilization and energy-aware communication protocols between sensors, actuators and controllers have been recent challenges in the community of Networked Control Systems (NCSs). 
To deal with such problems, event and self-triggered control schemes have been proposed as alternative approaches to the typical time-triggered controllers, see e.g.,  \cite{heemels2012a,dimos2010a,heemels2011a,lemmon2009a}. In contrast to the time-triggered case where the control signals are executed periodically, event and self-triggered strategies determine the executions based on the violation of prescribed control performances, such as Input-to-State Stability (ISS) \cite{dimos2010a} and ${\cal L}_\infty$ gain stability \cite{heemels2011a}. 

In particular, we are interested in designing a self-triggered strategy for \textit{constrained} control systems, 
where certain constraints such as physical limitations and actuator saturations need to be explicitly taken into account. 
One of the most popular control schemes to deal with such constraints is Model Predictive Control (MPC) \cite{Mayne2000a}. 
In the MPC strategy, the current control action is determined by solving a constrained optimal control problem online, based on the knowledge of current state information and dynamics of the plant. Moreover, applications of the event-triggered control to MPC have been recently proposed to reduce the frequency of optimization problems, see e.g., \cite{evmpc_linear1,evmpc_linear2,hashimoto2015a,hashimoto2017a}.

In this paper, we present a novel self-triggered control scheme for constrained systems from an alternative perspective to MPC, namely, a perspective from \textit{set invariance theory} \cite{blanchini1999a}. 
Set invariance theory has been extensively studied for the past two decades, see e.g.,  \cite{blanchini1994a,bitsoris1988a,gutman1986a,gilbert1991a}, and it provides a fundamental tool to design controllers for constrained control systems. Two established concepts are those of, a \textit{controlled invariant set} and $\lambda$-\textit{contractive set}. While a controlled invariant set says that the state stays inside the set for all the time, a $\lambda$-contractive set guarantees the more restrictive condition that the state is stabilized to the origin. 
Several different algorithms to compute the controlled invariant and $\lambda$-contractive set have been proposed, e.g., in \cite{blanchini1994a,bitsoris1988a,dorea1999a,hovd2014a}.

Aside from MPC, only a few works have been reported for designing self-triggered strategies for constrained control systems, 
see e.g., \cite{wu2014a,lehmann2012a,seuret2013a}, where the authors focus on the constraint of actuator saturation problem. In \cite{wu2014a}, an ellipsoidal contractive set is obtained under actuator saturation, and the corresponding stabilizing controller is designed by solving Linear Matrix Inequalities (LMIs). Relavant works have been also investigated in \cite{lehmann2012a}, where the authors have proposed event-triggered controllers by incorperating anti-windup mechanisms. 

The self-triggered strategy proposed in this paper takes a different problem formulation and provides a new approach with respect to previous works in the literature. In contrast to earlier results, the self-triggered scheme is provided for a more general class of constrained control systems, where both state and input constraints (including actuator saturations) are taken into account. The proposed approach mainly consists of the two steps; first, based on an assigned $\lambda$-contractive set, a domain of attraction is enlarged and a collection of polyhedral sets is generated through vertex operations. Based on the generated sets, the second step is to  translate them into the corresponding {transition system}, which consists of symbolic states and transitions to represent the original system's behavior. 
By this translation, inter-event time steps can be efficiently found by implementing standard graph search algorithms to obtain the minimal total cost to a terminal state.

This paper is organized as follows. In Section~II, the system description and some preliminaries of invariant set theory are given. In Section~III, several offline procedures to design the self-triggered strategy are given. 
In Section~IV, the proposed self-triggered strategy is presented. 
In Section~V, an illustrative example is given. We finally conclude in Section~VI. \\

\noindent
\textit{(Nomenclature)}: Let $\mathbb{R}_+$, $\mathbb{N}$, $\mathbb{N}_+$ be the positive real, non-negative and positive integers, respectively. The interior of the set ${\cal S} \subset \mathbb{R}^n$ is denoted as $int \{ {\cal S} \}$. A set ${\cal S} \subset \mathbb{R}^n$ is called ${\cal C}$-set if it is compact, convex, and $0 \in int \{{\cal S}\}$. 
For vectors $v_1, \cdots, v_N$, $conv \{ v_1, \cdots, v_N \}$ denotes their convex hull. A set of vectors $\{v_1, \cdots, v_N \}$ whose convex hull gives a set ${\cal P}$ (i.e., ${\cal P} =conv \{ v_1, \cdots, v_N \}$), and each $v_n$, $n\in \{1, 2, \cdots, N\}$ is not contained in the convex hull of 
$v_1, \cdots, v_{n-1}, v_{n+1}, \cdots, v_{N}$ is called a set of \textit{vertices} of ${\cal P}$. For a given $\lambda \in \mathbb{R}$ and a ${\cal C}$-set ${\cal S}\subset \mathbb{R}^n$, denote $\lambda {\cal S}$ as $\lambda {\cal S} = \{ \lambda x \in \mathbb{R}^n : x \in {\cal S}\}$. Given a ${\cal C}$-set ${\cal S} \subset \mathbb{R}^n$, the function $\Psi_{\cal S} : \mathbb{R}^n \rightarrow \mathbb{R}_+$ with $\Psi_{{\cal S}}(x) = {\rm inf} \{\mu : x \in \mu {\cal S}, \mu \geq 0 \}$ is called a \textit{gauge function}. 

\section{System description, strategies and some preliminaries}\label{strategy_sec}
In this section, the system description and an overview of the self-triggered strategy, as well as some established results of set-invariance theory are provided.  
\subsection{System description and control strategy}\label{sys_desc_sec}
Consider a linear time invariant (LTI) system in the discrete time domain given by
\begin{equation}\label{sys}
x ({k+1}) = A x (k) + Bu (k)
\end{equation}
for $k\in \mathbb{N}$, where $x (k) \in \mathbb{R}^n$ is the state and $u (k) \in \mathbb{R}^m$ is the control variable. 
We assume that the pair $(A, B)$ is controllable, and the state and control input are constrained as $x(k) \in {\cal X},\ u(k) \in {\cal U}$, $\forall k \in \mathbb{N}$, where ${\cal X},\ {\cal U}$ are both polyhedral ${\cal C}$-sets described as 
\begin{equation}\label{constraint}
\begin{array}{lll}
{\cal X} = \{x\in \mathbb{R}^n: H_x x\leq h_x \}, \\
{\cal U} = \{u \in \mathbb{R}^m :H_u u\leq h_u \}, 
\end{array}
\end{equation}
where $H_x \in \mathbb{R}^{n_x \times n}$, $H_u \in \mathbb{R}^{n_u \times m}$ and $h_x$, $h_u$ are appropriately sized vectors having positive components. 
The control objective is to steer the state to the origin, i.e., $x(k) \rightarrow 0$ as $k\rightarrow \infty$. To achieve this, the following two-stage controllers will be designed; the first controller is to stabilize the state toward a prescribed, small target set around the origin. The target set will be constructed in the next section, such that a stabilizing controller exists inside the set. While steering the state to the target set, the following self-triggered controller will be active:
\begin{equation}\label{controller}
u (k) = \kappa (x(k_m)) \in {\cal U}, \ \ k \in [k_m , k_{m+1} ), 
\end{equation}
where $\kappa : {\cal X} \rightarrow {\cal U}$ denotes the state-feedback control law, and $k_0, k_1, k_2 \cdots$ with $k_0 = 0$, are the control update times defined as
\begin{equation}\label{transmission_times}
k_{m +1} = k_m + \Gamma (x(k_m)),
\end{equation}
with a state-dependent sampling function $\Gamma : {\cal X} \rightarrow \{ 1, 2, \cdots, j_{\max} \}$. Here the  maximal inter-event time step $j _{\max} \in \mathbb{N}_+$ is set a-priori in order to formulate the self-triggered strategy. \req{controller} implies that the control input is constant between two consecutive inter-event times. 
In designing the first controller, both the control performance and the number of control updates will be evaluated to determine the inter-event time steps. 

Once the state enters the target set, the second control strategy is implemented to achieve our control objective. In contrast to the first strategy, a periodic controller will be designed such that stability of the origin is achieved with maximal possible inter-event time step. 
When designing the second stage, we will not focus on improving the control performance; considering the fact that the state is inside the target set, which is already around the origin, the largest possible inter-event time step is provided to minimize the number of control updates. 

\subsection{Set-invariance theory}\label{set_invariance_sec}
In the following, we define the standard notions of \textit{controlled invariant set} and $\lambda$\textit{-contractive set}, which are  important concepts to characterize invariance and convergence properties for constrained control systems.
\begin{mydef}[Controlled invariant set]\label{lambda_contractive}
A set ${\cal P} \subseteq {\cal X}$ is said to be a controlled invariant set if and only if there exists a control law $g(x)\in {\cal U}$ such that $Ax + Bg(x) \in {\cal P}$ for all $x\in {\cal P}$. 
\end{mydef}
\begin{mydef}[$\lambda$-contractive set]\label{lambda_contractive}
A set ${\cal P} \subseteq {\cal X}$ is said to be a $\lambda$-contractive set for $\lambda\in [0, 1] $, if and only if there exists a control law $g(x)\in {\cal U}$ such that $A x + B g(x) \in \lambda {\cal P}$ for all $x\in {\cal P}$.
\end{mydef}
Roughly speaking, a set ${\cal P}$ is called $\lambda$-contractive set if all states in ${\cal P}$ can be driven into a tighter or equivalent region $\lambda {\cal P}$ by applying a one-step controller. From the definition, a $\lambda$-contractive set ${\cal P}$ is equivalent to a controlled invariant set for $\lambda=1$. 

We review several established results for obtaining a contractive set and the corresponding properties. For given $\lambda \in [0, 1)$ and ${\cal C}$-set ${\cal S}\subset {\cal X}$, there are several ways to efficiently construct a $\lambda$-contractive set in ${\cal S}$. 
Let ${\cal Q}_{\lambda}: \mathbb{R}^n \rightarrow \mathbb{R}^n$ be the mapping 
\begin{equation}\label{kstep_contrl}
{\cal Q}_{\lambda} ({\cal D}) = \{ x\in {\cal X} : \exists u \in {\cal U},\  A x + B u \in \lambda {\cal D} \}
\end{equation}
A simple algorithm to obtain a $\lambda$-contractive set in ${\cal S}$ is to compute $\Omega_j$, $j \in \mathbb{N}$ as
\begin{equation}\label{iterative_procedure}
\Omega_0 = {\cal S},\ \ \ \Omega_{j+1} = {\cal Q} _{\lambda } (\Omega_{j})  \cap {\cal S}, 
\end{equation}
and then it holds that the set ${\cal P} = \lim_{j\rightarrow \infty} \Omega_j$ is $\lambda$-contractive, see e.g., \cite{blanchini1994a}. Since ${\cal S}$ is ${\cal C}$-set, it is also shown in \cite{blanchini1994a,darup2017a}, that the set $\Omega_j$ converges in the sense that for every $\varepsilon > 0$, there exist a finite $j \in \mathbb{N}_+$ such that ${\cal P}\subseteq  \Omega_{j} \subseteq (1+\varepsilon) {\cal P}$.
Several other algorithms have been recently proposed, see e.g., \cite{hovd2014a} and also \cite{gutman1986a,darup2017a} for a detailed convergence analysis.
The following lemma illustrates the existence of a (non-quadratic) Lyapunov function in a given $\lambda$-contractive set:
\begin{mylem}\label{stability_lem}{\normalfont \cite{blanchini1994a,darup2015a}:}
Let ${\cal P} \subset {\cal X}$ be a $\lambda$-contractive ${\cal C}$-set with $\lambda \in [0, 1]$ and the associated gauge function $\Psi_{\cal P} : {\cal P} \rightarrow \mathbb{R}_+$. Then, there exists a control law $g : {\cal X} \rightarrow {\cal U}$ such that 
\begin{equation}\label{set_induced_lyapunov}
\Psi_{\cal P} ( Ax + B g (x)) \leq \lambda \Psi_{\cal P} ( x),
\end{equation}
for all $x \in {\cal P}$.
\end{mylem}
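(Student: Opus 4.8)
The plan is to exploit the positive homogeneity of the gauge function together with the convexity of ${\cal U}$ in order to upgrade the \emph{additive} containment guaranteed by $\lambda$-contractivity into the desired \emph{multiplicative} decay of $\Psi_{\cal P}$. First I would collect the elementary properties of $\Psi_{\cal P}$ for a ${\cal C}$-set: it is positively homogeneous of degree one, i.e. $\Psi_{\cal P}(\alpha z) = \alpha\,\Psi_{\cal P}(z)$ for every $\alpha \geq 0$; its sublevel sets are scaled copies of ${\cal P}$, namely $\{ z : \Psi_{\cal P}(z) \leq \mu \} = \mu {\cal P}$; and consequently $\Psi_{\cal P}(z) \leq 1$ if and only if $z \in {\cal P}$, with $\Psi_{\cal P}(z) = 0$ if and only if $z = 0$ (the last point using $0 \in int\{{\cal P}\}$ and compactness of ${\cal P}$). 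These facts are standard consequences of ${\cal P}$ being a ${\cal C}$-set and require no new work.

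The heart of the argument is a normalize-then-rescale construction of $g$. Fix $x \in {\cal P}$ with $x \neq 0$ and put $\mu := \Psi_{\cal P}(x) \in (0, 1]$. Then the normalized point $\bar{x} := x / \mu$ satisfies $\Psi_{\cal P}(\bar{x}) = 1$, so $\bar{x} \in {\cal P}$, and the definition of a $\lambda$-contractive set supplies a control $\bar{u} \in {\cal U}$ with $A\bar{x} + B\bar{u} \in \lambda {\cal P}$, i.e. $\Psi_{\cal P}(A\bar{x} + B\bar{u}) \leq \lambda$. I then define the feedback by $g(x) := \mu\,\bar{u}$, and for the singular case set $g(0) := 0$. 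Using $A x + B g(x) = \mu\,(A\bar{x} + B\bar{u})$ and positive homogeneity gives the chain
\begin{equation}
\Psi_{\cal P}\!\left(A x + B g(x)\right) = \mu\,\Psi_{\cal P}\!\left(A\bar{x} + B\bar{u}\right) \leq \mu\lambda = \lambda\,\Psi_{\cal P}(x),
\end{equation}
which is exactly \req{set_induced_lyapunov}; the case $x = 0$ holds trivially since both sides vanish. Extending $g$ arbitrarily onto ${\cal X}\setminus{\cal P}$ (e.g. by $0 \in {\cal U}$) completes the construction.

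The one step that genuinely needs verification, and which I expect to be the main obstacle, is the \emph{feasibility} of the rescaled input, i.e. that $g(x) = \mu\,\bar{u}$ remains in ${\cal U}$. This follows because ${\cal U}$ is a ${\cal C}$-set, hence convex with $0 \in {\cal U}$, and $\mu \in (0,1]$, so $\mu\,\bar{u} = \mu\,\bar{u} + (1 - \mu)\cdot 0 \in {\cal U}$; this convexity-plus-origin argument is precisely where the structure of ${\cal U}$ is used and should be stated explicitly. I would note that no continuity or measurability of $g$ is claimed by the lemma, so the pointwise (possibly non-unique) selection of $\bar{u}$ for each normalized point raises no technical difficulty for a mere existence statement. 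Everything else reduces to the homogeneity identity, so the proof is short once the normalize-rescale idea and the feasibility check are in place.
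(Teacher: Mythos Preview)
Your argument is correct: the normalize--then--rescale construction together with the convexity-plus-origin feasibility check for $g(x)=\mu\,\bar u\in{\cal U}$ is exactly the standard route (going back to the references \cite{blanchini1994a,darup2015a} cited in the lemma). The paper itself does not spell out a proof at all; it merely states that the lemma ``follows directly from \rdef{lambda_contractive}'' and defers to those references. In that sense you have supplied strictly more detail than the paper. One minor remark: applying the definition \emph{once} at $x$ only yields $\Psi_{\cal P}(Ax+Bg(x))\le\lambda$, which is weaker than $\lambda\,\Psi_{\cal P}(x)$ when $\Psi_{\cal P}(x)<1$; the normalization step you introduce is precisely what closes this gap, so the paper's phrase ``directly'' should be read as ``via the standard homogeneity argument'' rather than ``by a single application of the definition.''
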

\rlem{stability_lem} follows directly from \rdef{lambda_contractive}.
If $\lambda < 1$, \req{set_induced_lyapunov} implies the existence of a stabilizing controller in ${\cal P}$ in the sense that the output of the gauge function $\Psi_{\cal P}$ is guaranteed to decrease with the constant rate $\lambda$. The gauge function $\Psi_{\cal P}$ defined in ${\cal P}$ is known as \textit{set-induced Lyapunov function} in the literature; for a detailed discussion, see e.g., \cite{blanchini1994a}. 

If a given polyhedral ${\cal C}$-set ${\cal P}= conv\{ v_1, v_2, \cdots, v_{n_p} \}$ is $\lambda$-contractive, then from \rdef{lambda_contractive}, there exist a set of controllers $u_n \in {\cal U}$, $n\in \{ 1, 2, \cdots, n_p\}$ such that $A v_n + B u_n \in \lambda {\cal P}$ for all $n\in \{ 1, 2, \cdots, n_p\}$. Then, the following scaling property holds for ${\cal P}$:
\begin{mylem}\label{lem1}{\normalfont \cite{blanchini1994a}:}
Suppose that there exists $\gamma > 1$, such that
\begin{equation}\label{scaled_constraint}
\gamma u_n \in {\cal U},\ \ \gamma v_n \in {\cal X},\ \forall n \in \{ 1, \cdots, n_p\}.
\end{equation}
Then, a scaled set $\gamma {\cal P} = \{ \gamma x : x \in {\cal P} \}$ is also $\lambda$-contractive.
\end{mylem}
\rlem{lem1} states that for a given polyhedral, $\lambda$-contractive ${\cal C}$-set, the corresponding larger, scaled set is also shown to be $\lambda$-contractive as long as the constraint satisfactions \req{scaled_constraint} are fulfilled. Together with \rlem{stability_lem},  \rlem{lem1} illustrates the possibility to enlarge the domain of attraction to guarantee stability of the origin.

\section{Offline procedure to design the self-triggered strategy}
In this section an offline procedure to design the self-triggered strategy is provided. 
The procedure is mainly devided by the two steps;  
first, the target set is assigned to be $\lambda$-contractive in a given polyhedral set, and a domain of attraction is iteratively enlarged. In this step, a collection of polyhedral sets are generated under different control step sizes. 
Based on the generated sets, the second step is to translate them into the transition system, which consists of symbolic states and transitions to represent the control system's behavior \req{sys}. This translation allows us to utilize well-known graph search algorithms, so that the shortest path to a terminal state can be efficiently found. Finding such path is a key idea to determine inter-event time steps in the online self-triggered implementation. 
\subsection{Target set assignment}\label{targ_assign_sec}
Let us first assign a {target set}, denoted as ${\cal P}_0\subset {\cal X}$, to which state trajectories need to enter in finite time. To do this, consider an arbitrarily small, polyhedral ${\cal C}$-set ${\cal S}_0 \subset {\cal X}$ around the origin. Motivated by \rlem{stability_lem} that a stabilizing controller exists inside a contractive set, and by \rlem{lem1} that the domain of attraction can be enlarged, we assume that the target set is assigned to be $\lambda$\textit{-contractive} ($\lambda <1$) in ${\cal S}_0$. 
Note that for given $\lambda \in [0, 1)$ and the polyhedral ${\cal C}$-set ${\cal S}_0$, one can always construct the desired $\lambda$-contractive set ${\cal P}_0 \subseteq {\cal S}_0$ by polyhedral operations to compute \req{iterative_procedure}, or by other procedures, e.g., \cite{hovd2014a}. The obtained target set ${\cal P}_0$ can be denoted as 
\begin{equation}
{\cal P}_0 = conv\{v_1, v_2, \cdots, v_N \}, 
\end{equation}
where $v_n, n\in \{1, 2, \cdots, N \}$ represent the vertices of ${\cal P}_0$, and $N$ represents the number of them. 

\subsection{Enlarging the domain of attraction}\label{enlarge_sec}
As a scaled set $\gamma {\cal P} _0$ ($\gamma \geq 1$) becomes contractive from \rlem{lem1}, we can possibly enlarge the domain of attraction based on the original ${\cal P}_0$. Thus, in this subsection we provide a procedure to iteratively maximize the domain of attraction by solving an optimization problem on vertices. 

So far, we have only considered through the notion of $\lambda$-contractive set, how much the state gets closer to the origin under a \textit{one-step} controller. However, from a self-triggered view point, it may be useful to analyze how the state can be closer to the origin under a $j$\textit{-step} ($j>1$) constant controller. This motivates us to provide the following algorithm, in which a collection of sets are generated not only with a one-step controller, but also with longer control step sizes: \\

\noindent
\textbf{Algorithm 1}: 
For each $j \in \{1, 2, \cdots, j_{\max} \}$, find the sets ${\cal P}_{j, 0},\ {\cal P}_{j, 1},\ {\cal P}_{j, 2}, \cdots $ by the following steps:
\begin{enumerate}
\item (Initialization): 
Set ${\cal P} _{j,0} = a_{j,0}{\cal P} _0$, with $a_{j,0} = 1$.
\item For given $\ell \in \mathbb{N}$, $a_{j, \ell} \in \mathbb{R}_+$ and ${\cal P}_{j,\ell} = a_{j, \ell} {\cal P} _0 $, solve the following problem: 
\begin{equation}\label{find_a}
\underset{u_1,u_2,\cdots,u_{N}} {\max}\ a, \ \ \ \ a \in \mathbb{R}_+
\end{equation}
subject to $a > a_{j, \ell}$ and
\begin{equation}\label{constu}
a v_n \in {\cal X}, \ \ u_n \in {\cal U},\ \ \forall n \in \{1, 2, \cdots, N\},
\end{equation}
\begin{eqnarray}\label{constx}
A^j a v_n + \sum^{j} _{i=1} A^{i-1} B u_n \in {\cal P}_{j, \ell}, \ \forall n \in \{1, 2, \cdots, N\}. 
\end{eqnarray}
\item Let $a^*$ be a solution to the problem in step (2). 
If the problem does not have a solution, or it satisfies $a_{j, \ell} < a^* < a_{j, \ell} + \bar{a}$ for a given threshold $\bar{a}>0$, then terminate the algorithm. 
Otherwise, set $a_{j, \ell+1} = a^*$, ${\cal P}_{j, \ell+1} = a_{j, \ell+1} {\cal P}_0$,\ \ $\ell \leftarrow \ell+1$, and go back to the step (2). \qedwhite
\end{enumerate}

Recall that $j_{\max}$ is defined in \rsec{sys_desc_sec} as the maximal inter-event time step. 
For each $j\in \{1, \cdots, j_{\max} \}$ in Algorithm~1, 
we try to find a collection of sets by solving the optimization problem given by \req{find_a}, subject to \req{constu} and \req{constx}. In \req{constx}, $a v_n$, $n\in \{1, \cdots, N \}$ represent the vertices of the scaled set $a {\cal P}_0$, and $A^j a v_n + \sum^{j} _{i=1} A^{i-1} B u_n$ represents a point from $a v_n$ by applying a constant controller $u_n \in {\cal U}$ for $j$ steps. Thus, for given $a_{j, \ell} \in \mathbb{R}_+$ and the scaled set ${\cal P} _{j, \ell} = a_{j, \ell} {\cal P} _0$, the problem \req{find_a} aims to find the largest possible scaled set $a {\cal P}_0$ such that it contains ${\cal P}_{j, \ell}$ (i.e., $a > a_{j, \ell}$) and any vertex of $a {\cal P}_0$ can be driven into ${\cal P}_{j, \ell}$ by applying a $j$-step constant controller. 
Note that the optimization problem in the algorithm is a linear programming problem, since both constraints \req{constu} and \req{constx} are linear. 

In step (3) of Algorithm 1, the design parameter $\bar{a} >0$ is set as the threshold to terminate the algorithm; the algorithm terminates when no enlargement can be done or the enlargement goes below the threshold $\bar{a}$. 
\begin{myrem}[On the termination of Algorithm 1]\label{termination}
\normalfont
Note that the algorithm is guaranteed to terminate in a finite number of iterations due to the threshold $\bar{a}$ and the constraint \req{constu}. To verify this, suppose that the algorithm does \textit{not} terminate for some $j$ and is iterated for infinite number of times. Due to the threshold $\bar{a}$ in step (3) in Algorithm 1, we have $a_{j, \ell+1} \geq a_{j, \ell} + \bar{a} \geq a_{j, \ell-1} + 2\bar{a} \cdots \geq a_{j, 0} + (\ell+1) \bar{a}$, which implies that $a_{j, \ell} \rightarrow \infty$ (i.e., ${\cal P}_{j, \ell} \rightarrow \mathbb{R}^n$) as $\ell \rightarrow \infty$. However, this contradicts the state constraint in \req{constu}, which imposes ${\cal P}_{j, \ell} \subseteq {\cal X} \subset \mathbb{R}^n$. Thus, it is shown that the Algorithm~1 terminates in a finite number of iterations. \qedwhite
\end{myrem}

For each $j\in \{ 1, \cdots, j_{\max} \}$, let 
\begin{equation}\label{generated_sets}
{\cal P}_{j, 0},\ {\cal P} _{j, 1},\ \cdots, {\cal P} _{j, \ell_j}, 
\end{equation}
be the collection of sets by applying Algorithm 1, where ${\ell_j}$ denotes the total number of generated sets for each $j$. 
As an output of the algorithm, we also have the positive constants $a_{j, 0}, a_{j, 1}, \cdots, a_{j, \ell_j}$ for each $j\in \{1, \cdots, j_{\max} \}$ with $a_{j, 0}=1$ and ${\cal P} _{j, \ell} = a_{j, \ell} {\cal P}_0$, $\ell \in \{0, \cdots, \ell_j \}$. Thus, each $a_{j, \ell}$ represents the size of ${\cal P} _{j, \ell}$, with respect to ${\cal P}_0$, and it holds that ${\cal P} _{j, 0} \subset {\cal P}_{j, 1} \subset \cdots \subset {\cal P}_{j, \ell_j}$ since $a_{j, 0} < a_{j, 1} < \cdots < a_{j, \ell_j}$. 
\begin{mylem}\label{convergence}
Let ${\cal P}_{j, 0}, \cdots, {\cal P}_{j, \ell_j}$, $j\in\{1, \cdots, j_{\max} \}$ be the generated sets by applying Algorithm~1. Then, for every $x \in {\cal P}_{j, \ell}$ with $j\in \{1, \cdots, j_{\max}\}$, $\ell \in \{ 1, \cdots, \ell_j \}$, there exists $u\in {\cal U}$, such that $x' = A^j x + \sum^{j} _{i=1} A^{i-1} B u \in {\cal P}_{j, \ell-1}$. 
\end{mylem}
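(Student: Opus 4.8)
The plan is to exploit the convexity of all sets involved together with the linearity of the $j$-step dynamics, so that the claim reduces to the vertex-level property that Algorithm~1 enforces by construction. First I would record the fact guaranteed by the algorithm when the set ${\cal P}_{j, \ell}$ is produced. Since $\ell \geq 1$, the value $a_{j, \ell}$ is the optimizer $a^*$ returned by step (2) when the current set is ${\cal P}_{j, \ell-1}$; feasibility of \req{constu}--\req{constx} at this optimizer then yields controllers $u_1, \ldots, u_N \in {\cal U}$ with $A^j a_{j, \ell} v_n + \sum_{i=1}^{j} A^{i-1} B u_n \in {\cal P}_{j, \ell-1}$ for every $n \in \{1, \ldots, N\}$. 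This is precisely the vertex-level version of the statement, since $a_{j, \ell} v_n$ are the vertices of ${\cal P}_{j, \ell} = a_{j, \ell}{\cal P}_0$.

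Next I would lift this from vertices to an arbitrary $x \in {\cal P}_{j, \ell}$. Because ${\cal P}_{j, \ell} = a_{j, \ell}{\cal P}_0 = conv\{a_{j, \ell}v_1, \ldots, a_{j, \ell}v_N\}$, I can write $x = \sum_{n=1}^{N} \mu_n (a_{j, \ell} v_n)$ with $\mu_n \geq 0$ and $\sum_{n} \mu_n = 1$, and then select the control input $u = \sum_{n=1}^{N} \mu_n u_n$ as the \emph{matching} convex combination of the vertex controllers. Convexity of ${\cal U}$ immediately gives $u \in {\cal U}$, so the input constraint is met.

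The decisive step is to observe that the map $\Phi(x, u) = A^j x + \left(\sum_{i=1}^{j} A^{i-1} B\right) u$ is linear in the pair $(x, u)$. Applying this linearity to the combination just constructed yields $x' = \Phi(x, u) = \sum_{n=1}^{N} \mu_n\, \Phi(a_{j, \ell} v_n, u_n)$, which is a convex combination of the $N$ points $\Phi(a_{j, \ell} v_n, u_n)$, each lying in ${\cal P}_{j, \ell-1}$ by the vertex property established above. Since ${\cal P}_{j, \ell-1}$ is convex, being a scaled ${\cal C}$-set, I conclude $x' \in {\cal P}_{j, \ell-1}$, which is the assertion.

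I do not expect a genuine obstacle: the argument is essentially bookkeeping in convexity. The only point requiring care is that the \emph{same} weights $\mu_n$ must be used for both the state and the control, so that linearity of $\Phi$ collapses the two separate convex combinations into a single convex combination of outputs; mismatched weights would break this identity. Everything else follows from the convexity of ${\cal U}$ and ${\cal P}_{j, \ell-1}$ and the linear structure of the $j$-step dynamics.
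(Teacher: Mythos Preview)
Your argument is correct and is essentially identical to the paper's own proof: both write $x$ as a convex combination of the scaled vertices $a_{j,\ell}v_n$, take the same convex combination of the vertex controllers $u_n$ guaranteed by \req{constu}--\req{constx}, and use linearity of the $j$-step map together with convexity of ${\cal U}$ and ${\cal P}_{j,\ell-1}$ to conclude. Your explicit remark about using matching weights for state and control is a nice clarification, but otherwise there is nothing to add.
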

The proof is given in the Appendix. 
\rlem{convergence} states that for every $x$ in ${\cal P}_{j, \ell}$, there exists a constant, $j$-step controller $u\in {\cal U}$, such that the state can enter the smaller set ${\cal P}_{j, \ell-1} \subset {\cal P}_{j, \ell}$. 
By following this argument, it is deduced that any state starting from ${\cal P}_{j, \ell}$ will eventually enter the target set ${\cal P}_{j, 0} = {\cal P}_0$ in finite time. 
Let ${\cal P}_{\max} = a_{\max} {\cal P}_0$ be the largest set in \req{generated_sets} given by $a_{\max} ={\rm max}\ a_{j, \ell},\ j\in\{1, \cdots, j_{\max} \},\ \ell \in\{0, \cdots, \ell_j \}$. Since ${\cal P}_{\max}$ is the largest set and any state in ${\cal P}_{\max}$ can enter ${\cal P}_0$, ${\cal P}_{\max}$ is regarded as the maximal domain of attraction obtained by Algorithm~1. \\

\noindent
\textit{(Example 1)}: Consider the double integrator system discretized under $0.1$ sampling time interval; $A = [1.0\ 0.1;\ 0\ 1.0]$, $B = [0.005;\ 0.1]$ and assume that $|u|\leq 2.0$. 
For a given box set ${\cal S}_0 = \{x \in \mathbb{R}^2 : |x_1|\leq0.2, |x_2|\leq 0.2 \}$, the target set ${\cal P} _0\subseteq {\cal S}_0$ with $\lambda =0.96$ is computed and Algorithm~1 is implemented with $j_{\max} = 30$, $\bar{a}=0.01$. The simulation is conducted on Matlab 2016a, using Multi-Parametric Toolbox (MPT3).
The generated sets \req{generated_sets} for the case $j=5, 10, 20$ are illustrated in \rfig{levelset_result}. In this example, the maximal domain of attraction ${\cal P}_{\max}$ is attained with $j = j_{\max}$ and is also given in the figure.  
\qedwhite 

\begin{figure}[t]
   \centering
    \subfigure[$j=5$]
      {\includegraphics[width=4.3cm]{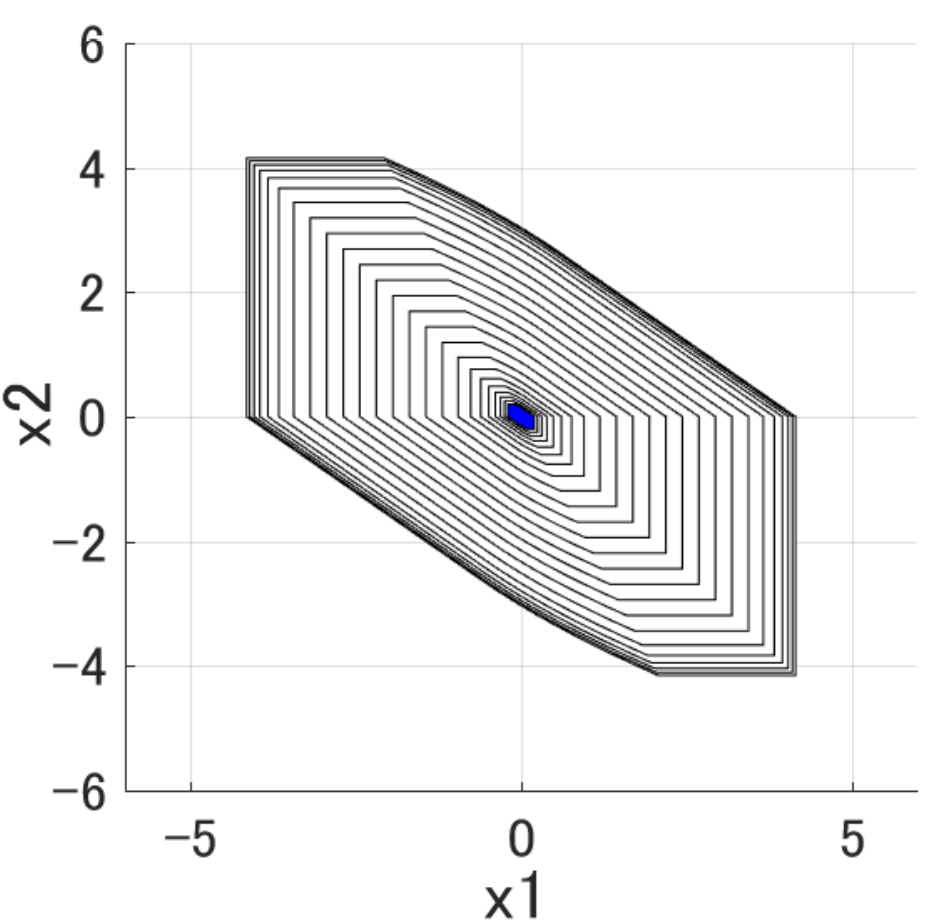}}
      \hspace{1pt}
    \subfigure[$j=10$]
      {\includegraphics[width=4.05cm]{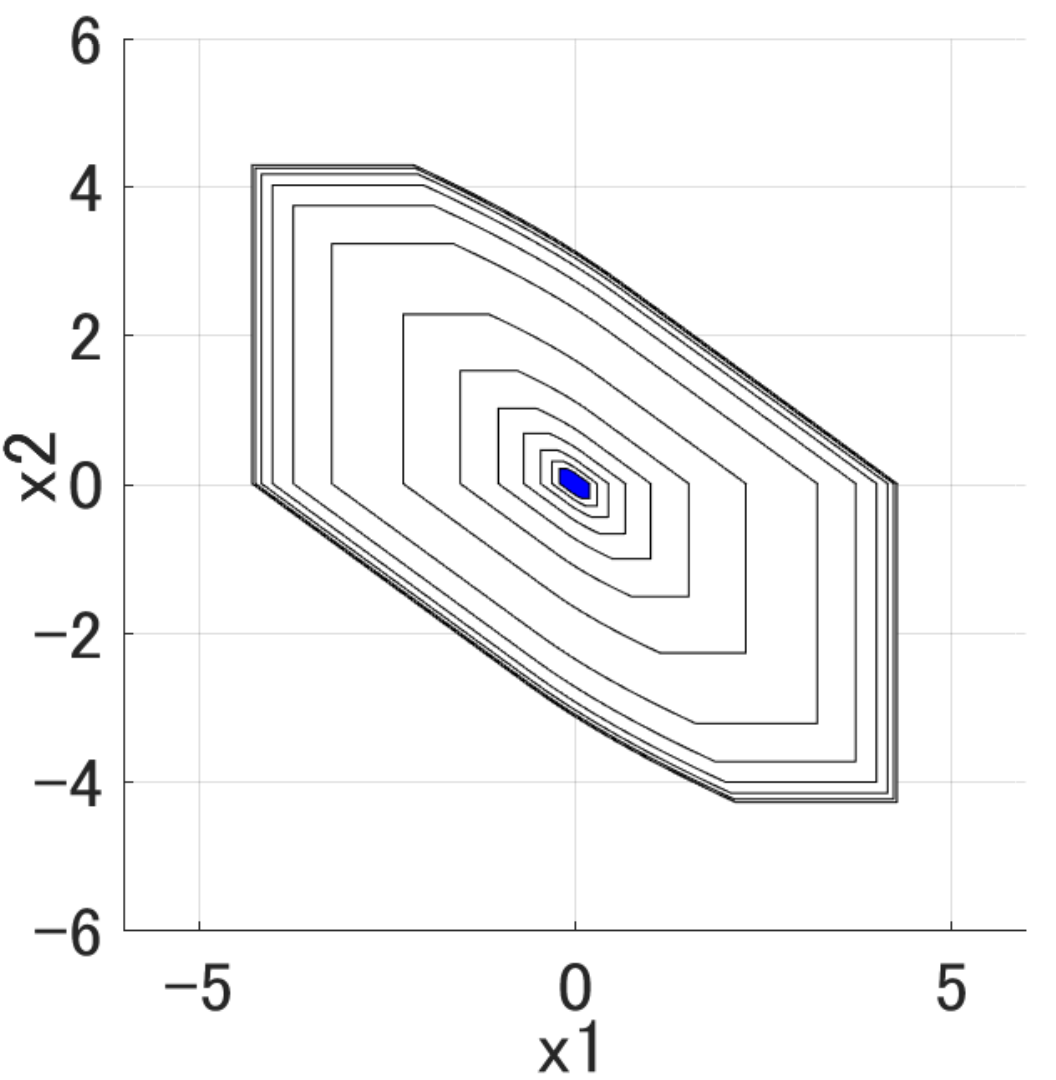}}
    \subfigure[$j=20$]
      {\includegraphics[width=4.3cm]{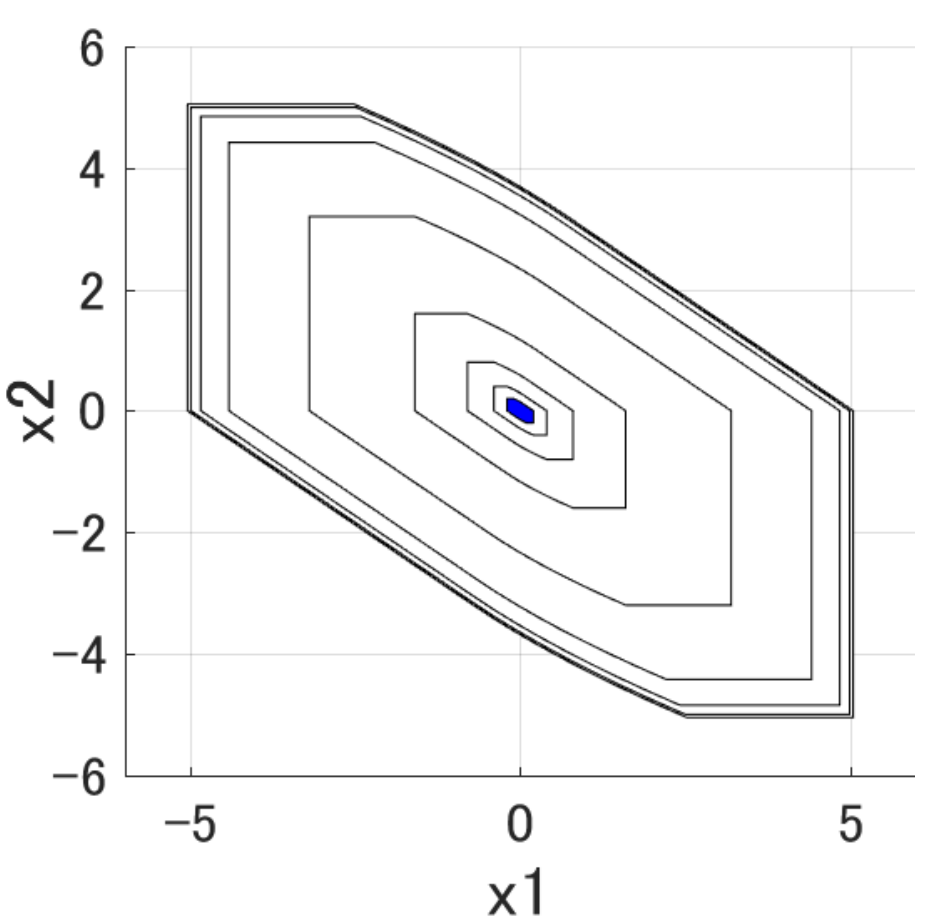}}
    \subfigure[Maximal domain of attraction ${\cal P}_{\max}$ (black solid line).]
      {\includegraphics[width=4.15cm]{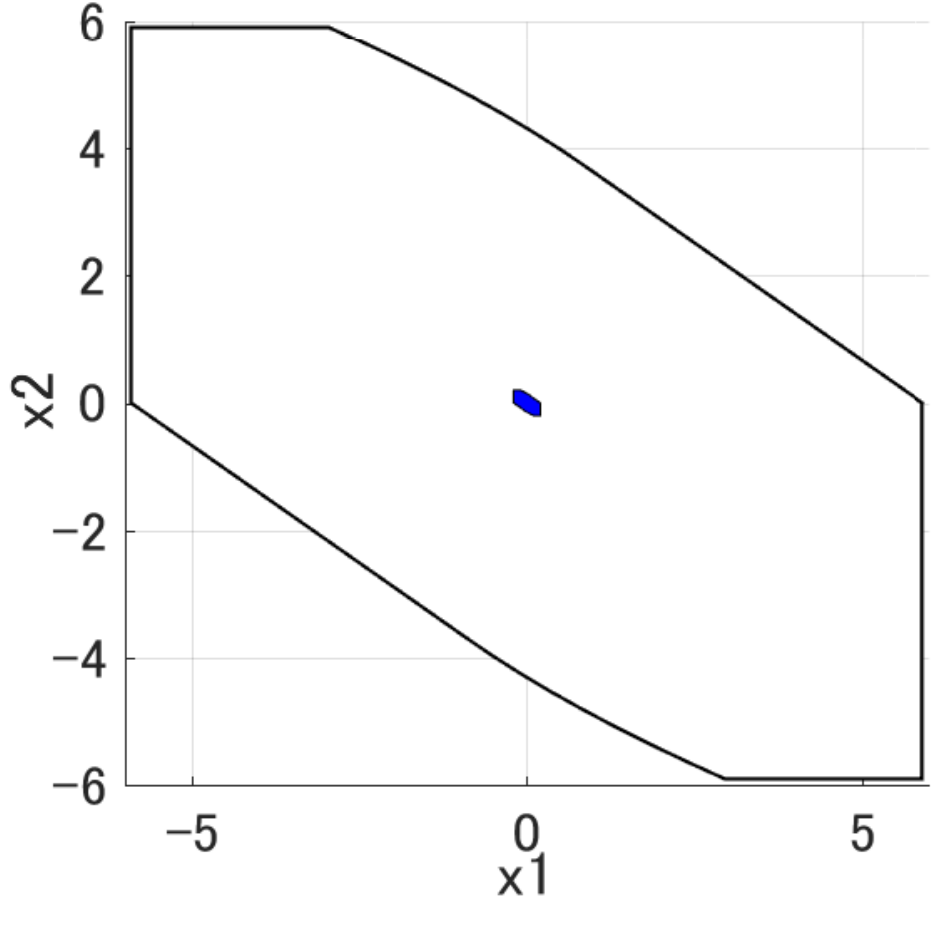}\label{doa}}
    \caption{A collection of generated sets ${\cal P}_{j, 0}, \cdots, {\cal P}_{j, \ell_j}$ according to Algorithm~1 for the case $j=5, 10, 20$, and the maximal domain of attraction ${\cal P}_{\max}$. 
The blue small region represents the original target set ${\cal P} _0$, and the black solid lines in each figure show the boundaries of ${\cal P}_{j, 0}, {\cal P}_{j,1}, \cdots, {\cal P}_{j,\ell_j}$ with ${\cal P}_{j,0} = {\cal P}_0$ and ${\cal P}_{j, 0}\subset {\cal P}_{j,1}\subset \cdots \subset {\cal P}_{j,\ell_j}$.}
    \label{levelset_result}
 \end{figure}

\subsection{Transition systems based on generated sets}\label{transition_system_sec}
Having obtained the sets \req{generated_sets}, we next translate them into the transition system and the corresponding {graph structure}, which consists of symbolic states (nodes) and transitions (edges). 
In the following, we provide a standard definition of transition systems:
\begin{mydef}\label{transition_system}
A transition system is a tuple ${TS} = ({\cal S}, \delta_s, {\cal W}_s, {\cal F}_s) $ where ${\cal S}$ is a set of states, 
$\delta_s \subseteq {\cal S} \times {\cal S}$ is a transition relation, ${\cal W}_s: \delta_s \rightarrow \mathbb{R}_+$ is a cost function associated with each transition, and ${\cal F}_s \subseteq {\cal S}$ is a set of terminal states. 
\end{mydef}
Suppose that $x \in {\cal P} _{j, \ell}$ with $j\in \{1, \cdots, j_{\max} \}$, $\ell \in \{1, \cdots, \ell_{j} \}$. Then, from \rlem{convergence} there always exists a constant controller $u\in {\cal U}$, such that $x' = A^j x + \sum^{j} _{i=1} A^{i-1} B u \in {\cal P}_{j,\ell-1}$. We interpret this fact by newly defining the \textit{symbolic} states as $s_{j, \ell}$, $j\in \{ 1, 2, \cdots, j_{\max}\}$, $\ell \in \{ 0, 1, \cdots, \ell_j \}$, and the corresponding transition $(s_{j, \ell}, s_{j, \ell-1}) \in \delta_j$, where the state $s_{j, \ell}$ represents the set ${\cal P}_{j, \ell}$, and the transition $(s_{j, \ell}, s_{j, \ell-1}) \in \delta_j$ indicates that for every $x \in {\cal P}_{j, \ell}$, $x$ can always enter ${\cal P}_{j, \ell-1}$. The illustration of this interpretation is depicted in \rfig{graph_interpret}. 

Formally, we define the following notion of a \textit{$j$-step symbolic transition system}: 
\begin{mydef}\label{jsymbolic_system}
A $j$-step symbolic transition system for each $j\in \{1, 2, \cdots, j_{\max} \}$, is a tuple $TS_j = ( {\cal S}_j, \delta_{j}, {\cal W}_j, {\cal F}_j)$, where 
\begin{itemize}
\item ${\cal S}_j = \{ s_{j,0},\ s_{j,1},\ \cdots, s_{j,\ell_j} \}$ is a set of symbolic states; 
\item $\delta_{j} \subseteq {\cal S}_j \times  {\cal S}_j$ is a transition relation, where $(s_{j, \ell}, s_{j,\ell-1}) \in \delta_{j}$ for all $\ell \in \{1, \cdots, \ell_j\}$; 
\item ${\cal W}_j: {\delta}_j \rightarrow \mathbb{R}_+$ is a cost function associated with the transition $\delta_j$ and is given by 
\begin{equation}\label{reward}
{\cal W}_j ( \delta_j (s_{j, \ell}, s_{j, \ell-1})) = p j /\left(a_{j, \ell} - a_{j, \ell-1} \right) + q /j, 
\end{equation}
where $p, q \in \mathbb{R}_+$ are given weights satisfying $p, q > 0$. 
\item ${\cal F}_j \subset {\cal S}_j$ is the terminal state given by ${\cal F}_j = \{ s_{j, 0} \}$.
\end{itemize}
\end{mydef}
\begin{figure}[t]
   \centering
    \subfigure[Translating two sets ${\cal P}_{j, \ell}$, ${\cal P}_{j, \ell-1}$ into the corresponding two symbolic states $s_{j, \ell}, s_{j, \ell-1}$.]
      {\includegraphics[width=6.0cm]{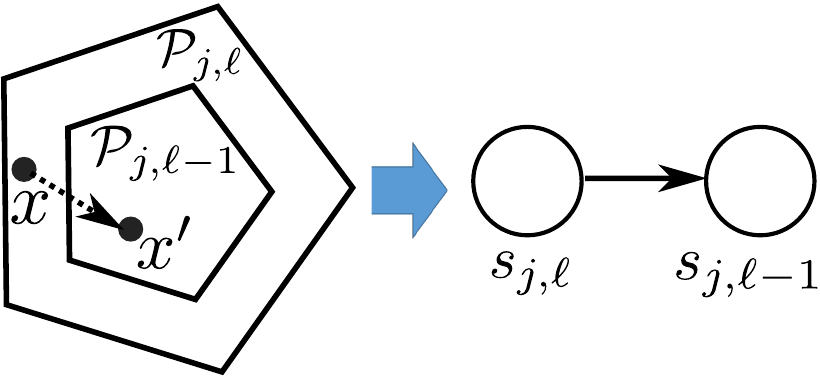}\label{graph_interpret}}
      \hspace{5pt}
    \subfigure[Generated symbolic transition systems $TS_j$.]
      {\includegraphics[width=6cm]{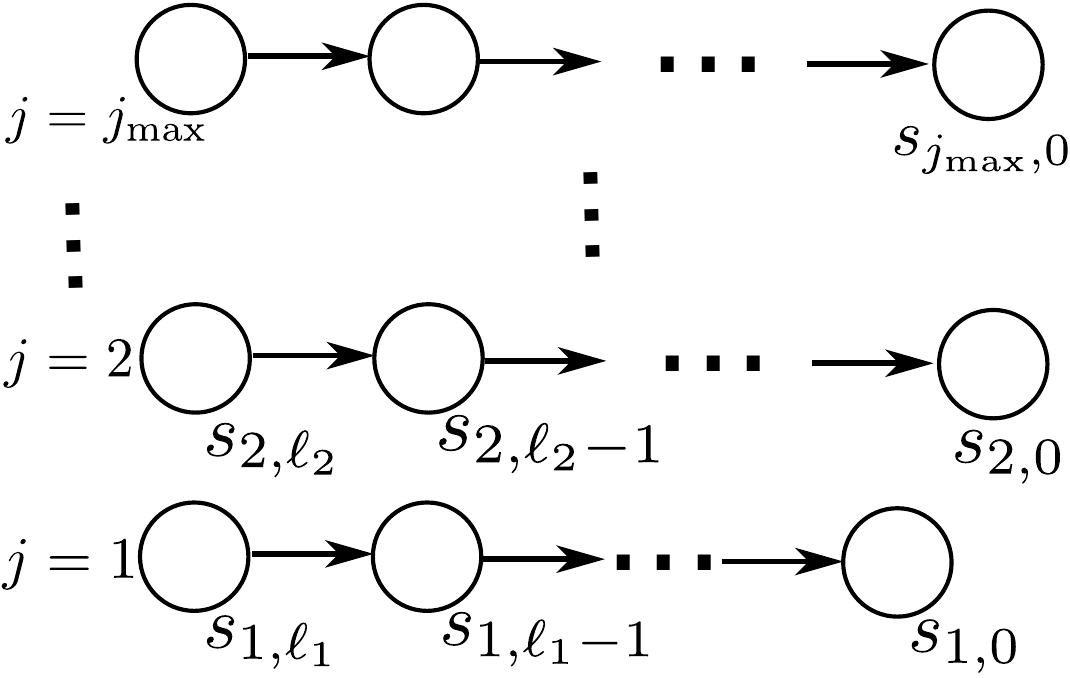}\label{graph_interpret2}}
    \caption{The illustration of translating into the transition system $TS_j$ in \rdef{jsymbolic_system}.}
 \end{figure}

The obtained transition system can be seen as a graph structure, where symbolic states represent nodes and transitions represent edges; see the illustration of $TS_j$ in \rfig{graph_interpret2}. 
It can be seen from \rfig{graph_interpret2}, that each symbolic state $s_{j, \ell}$ can eventually reach the terminal state $s_{j, 0}$. This in turn means in the state domain ${\cal X}$ that any state $x$ in ${\cal P}_{j, \ell}$ can eventually enter the target set ${\cal P} _0$ in finite time. 
Thus, stabilizability of $x$ to the desired target set is equivalent to the reachability of symbolic state to the terminal state. 

The cost function \req{reward} represents a penalty associated to each transition $(s_{j, \ell}, s_{j, \ell-1}) \in \delta_j$, in terms of control performance and the inter-event time steps. Recall that $a_{j, \ell}$ represents the size of ${\cal P}_{j,\ell}$ with respect to ${\cal P}_0$, i.e., ${\cal P}_{j, \ell} = a_{j, \ell} {\cal P}_0$.
The term $(a_{j, \ell} - a_{j, \ell-1})$ indicates how much the set size is \textit{reduced} from ${\cal P}_{j, \ell}$ to ${\cal P}_{j, \ell-1}$ by applying a $j$-step controller, which implies, how much the state $x$ is guaranteed to be closer to the target set ${\cal P} _{0}$. Thus the term $(a_{j, \ell} - a_{j, \ell-1})/j$ represents the rate of convergence to the target set. As achieving larger $(a_{j, \ell} - a_{j, \ell-1})/j$ leads to a better control performance, we take the reciprocal of it to represent as a {cost} function (see \req{reward}). 
From a self-triggered control view point, less control updates will be obtained when control inputs can be applied constantly longer (i.e., when $j$ becomes larger). 
Thus, the second part in \req{reward} involves $1/j$ to represent a cost for the inter-event time steps; as $j$ gets larger, then we obtain a larger inter-event time step and a smaller cost is obtained. 

The values of $p, q >0$ in \req{reward} represent tuning weights associated with each part of the cost. In later sections, we will illustrate through a simulation example that the trade-off between the control performance and inter-event time steps can be regulated by appropriately tuning these parameters. 
\subsection{Composition}\label{composition_sec}
Although each symbolic transition system provides a path to the terminal state, it gives such path only in a single transition system; if the symbolic state \textit{could} jump to another state in another transition system, (e.g., $s_{j, \ell} \rightarrow s_{j' , \ell'}$ with $j\neq j'$), then it could generate other paths that could reduce the total cost to the terminal state. Therefore, in this subsection we provide a few more steps by adding several edges among different transition systems, and then construct a composite model to synthesize the self-triggered strategy. 

For a given ${\cal P}_{j, \ell}$, suppose that there exist $j' (\neq j)$, $\ell' \in \{1, \cdots, \ell_{j'} \}$ such that ${\cal P}_{j', \ell'} \subset {\cal P}_{j, \ell} \subseteq {\cal P}_{j', \ell'+1}$ (i.e., $a_{j', \ell'} < a_{j, \ell} \leq a_{j', \ell'+1}$), see such illustration in \rfig{add_edge}. 
In this case, from \rlem{convergence} every $x \in {\cal P}_{j', \ell'+1}$ can be driven into ${\cal P}_{j',\ell'}$ with a $j'$-step constant controller. 
This means that every $x \in {\cal P}_{j, \ell}$ can also be driven into ${\cal P}_{j', \ell'}$ since ${\cal P}_{j, \ell} \subseteq {\cal P}_{j', \ell'+1}$. That is, for every $x \in {\cal P}_{j, \ell}$ there exists $u \in {\cal U}$ such that $x' = A^{j'} x + \sum^{j'} _{i=1} A^{i-1} B u \in {\cal P}_{j',\ell'}$.  
As illustrated in \rfig{add_edge}, we can then add a transition from $s_{j, \ell}$ to $s_{j', \ell'}$, since every $x \in {\cal P}_{j, \ell}$ can always enter ${\cal P}_{j', \ell'}$. 
Motivated by this fact, we define the following composite transition system: 
\begin{figure}[t]
   \centering
    \subfigure[If ${\cal P}_{j', \ell'}\subset {\cal P}_{j, \ell} \subseteq {\cal P}_{j', \ell'+1}$ (i.e., $a_{j', \ell'} < a_{j, \ell} \leq a_{j', \ell'+1}$) as illustrated in the above figure, then we add an edge from $s_{j, \ell}$ to $s_{j', \ell'}$.]
      {\includegraphics[width=6.7cm]{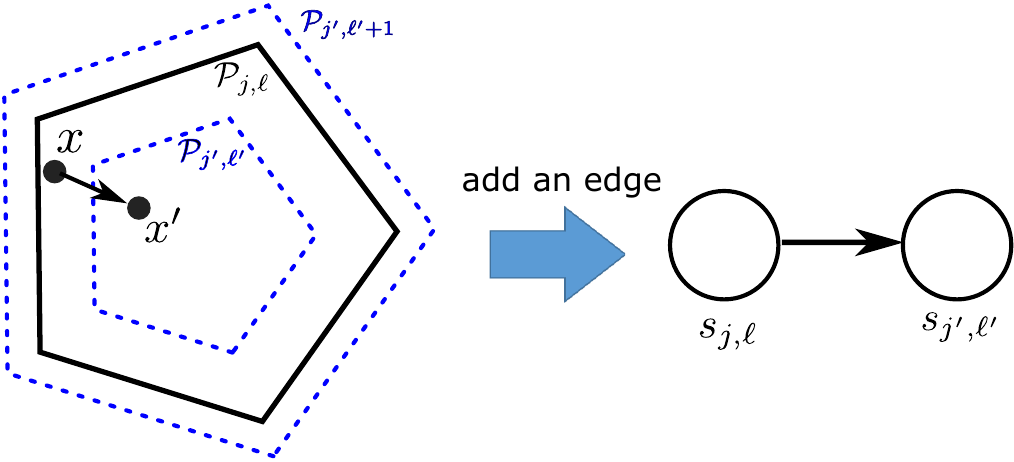}\label{add_edge}}
      \hspace{5pt}
    \subfigure[The figure shows an example of composite transition system $TS$ in \rdef{composition}. A terminal state $s_f$ and new transitions are added according to \rfig{add_edge}.]
      {\includegraphics[width=7.5cm]{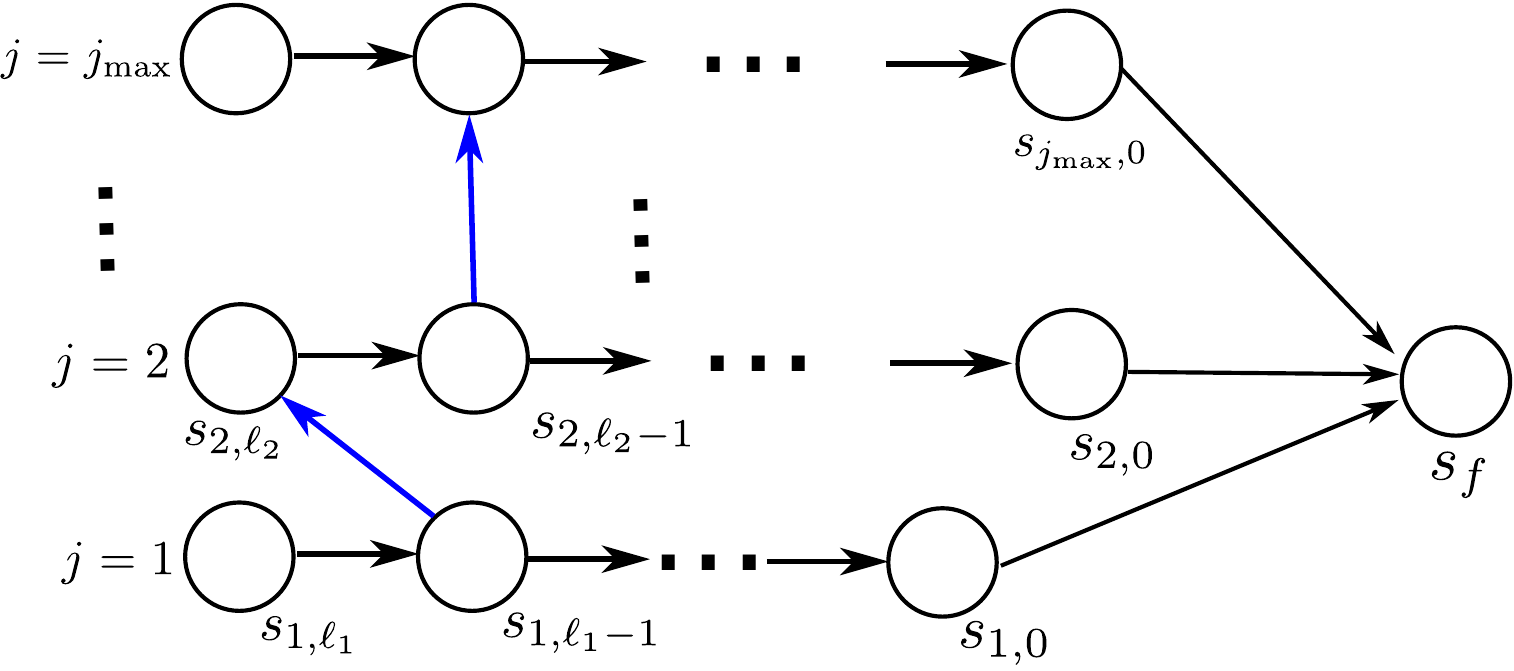}\label{graph_comp2}}
    \caption{Steps to construct a composite transition system}
 \end{figure}
\begin{mydef}\label{composition}
A composite transition system ${TS}$ is a tuple $TS = ( {\cal S}, \delta_{s}, {\cal R}_s, {\cal F}_s )$, where 
\begin{itemize}
\item ${\cal S} = \{s_f, {\cal S}_1, {\cal S}_2, \cdots, {\cal S}_{j_{\max}}\}$ is a set of symbolic states;
\item $\delta_{s} \subseteq {\cal S} \times  {\cal S}$ is a transition relation, where
\begin{enumerate}
\item $(s_{j, \ell}, s_{j, \ell-1}) \in \delta_s$ for all $j \in\{1, 2, \cdots, j_{\max}\}$, $\ell \in \{0, \cdots, \ell_j\}$;
\item $(s_{j, 0}, s_f) \in \delta_s$ for all $j \in\{1, 2, \cdots, j_{\max}\}$;
\item $(s_{j, \ell}, s_{j', \ell'}) \in \delta_s$ if 
\begin{equation}\label{acond}
a_{j', \ell'} < a_{j, \ell} \leq a_{j', \ell'+1},
\end{equation}
for $j, j'\in \{ 1, \cdots, j_{\max}\}$, $\ell \in \{ 0, \cdots, \ell_j\}$, $\ell' \in \{ 0, \cdots, \ell_{j'}\}$;
\end{enumerate}
\item ${\cal W}_s: {\delta_s} \rightarrow \mathbb{R}_+$ is the cost function associated with the transition $\delta_s$ given by; 
\begin{equation}\label{cost_composite}
{\cal W}_s ( \delta_s (s_{j, \ell} , s_{j', \ell'}) ) = p j'/(a_{j, \ell} - a_{j',\ell'}) + q /j'
\end{equation}
for $( s_{j, \ell} , s_{j', \ell'} ) \in \delta_s$, and ${\cal W}_s ( \delta_s (s_{j, 0} , s_f) ) = 0$ for all $j, j' \in \{1, 2, \cdots, j_{\max} \}$. 
\item ${\cal F}_s \subset {\cal S}$ is the terminal state given by ${\cal F}_s = \{ s_f \}$. 
\end{itemize}
\end{mydef}
The illustration of $TS$ is depicted in \rfig{graph_comp2}. In the composite transition system ${TS}$ defined above, an additional node $s_f$ and additional transitions among symbolic states are given. $s_f$ represents the terminal state and allows transitions $(s_{j,0}, s_f)\in \delta_s$ with the cost $0$ (${\cal W}_s (\delta_s(s_{j,0}, s_f)) =0$ ) for all $j\in \{1, 2, \cdots, j_{\max} \}$. Additional edges are given based on the motivation described above, and the corresponding costs are defined similarly to the $j$-step symbolic transition systems. 

Note that for every transition of two states $( s_{j, \ell} , s_{j', \ell'} ) \in \delta_s$, we have ${\cal P}_{j', \ell'} \subset {\cal P}_{j, \ell}$ since $a_{j', \ell'} < a_{j, \ell}$ (see \req{acond}). 
Moreover, since every cost associated with each transition is non-negative, one can always utilize well-known graph search algorithms such as Dijkstra algorithm, to find an optimal path to $s_f$ with the minimal total cost.
Finding such optimal path is a key idea to determine inter-event time steps to implement the self-triggered strategy in the next section. 


\section{Online implementation}
Having defined the composite transition system, let us now formulate the online implementations as the main result of this paper.

\subsection{Self-triggered strategy}
As mentioned in \rsec{strategy_sec}, the proposed control strategy consists of two parts; the first one is to steer the state to the target set ${\cal P}_0$ through the self-triggered strategy, and the second one is to stabilize to the origin inside ${\cal P}_0$ with the maximal possible inter-event time steps. In this subsection we first present the self-triggered strategy. In the following proposed self-triggered algorithm, 
let ${\pi}: {\cal S} \rightarrow {\cal X}$ be the mapping from a symbolic state $s_{j, \ell}$ in ${\cal S}$ to the corresponding set ${\cal P}_{j,\ell}$, i.e., $\pi ( s_{j, \ell} ) = {\cal P}_{j, \ell}$. \\

\noindent
\textbf{Algorithm 2 (Self-triggered strategy)}: 
\begin{enumerate}
\item (Initialization) : Set $k_0 = 0$.
\item For an update time $k_m$, $m\in \mathbb{N}$ and $x(k_m)$, the controller computes the pair $(j^*, \ell^*)$ by:
\begin{equation}\label{smallest_set}
\begin{aligned}
(j^*, \ell^* & ) = \underset{j, \ell}{\rm argmin}\ a_{j, \ell},\\
             {\rm s.t.}\ \ \ & x (k_m) \in {\cal P}_{j,\ell} (=a_{j, \ell} {\cal P}_0)
\end{aligned}
\end{equation}
for $j \in\{1,\cdots,j_{\max}\}, \ell \in \{0, \cdots,\ell_j \}$.
\item Find a finite path in $TS$:
\begin{equation}\label{optimal_path}
s_m (0), \ s_m (1), \cdots\ , s_m (d),
\end{equation}
where $s_m (0)=s_{j^*, \ell^*}$ and $s_m (d) = s_f$, such that the total cost $\sum^{d-1} _{j=0} {\cal W}_s (\delta_s(s_m(j ), s_m(j+1)))$ is minimized by applying, e.g., Dijkstra algorithm. 
\item 
Suppose that $s_m (1)$ is given by $s_m (1) = s_{j_p, \ell_p}$ for some $j_p \in\{1, \cdots, j_{\max} \}$, $\ell_p \in \{0, \cdots, \ell_{j_p} \}$, and the corresponding set being ${\cal P}_{j_p, \ell_p} = \pi (s_{j_p, \ell_p})$. Then, compute the control input $u^*$ to be applied by solving the following problem;
\begin{equation}\label{opt_controller}
u^* = \underset{u \in {\cal U}}{\rm argmin}\ \varepsilon
\end{equation}
subject to $\varepsilon \in [0, 1)$ and 
\begin{equation}
A^{{j}_p} x(k_m) + \sum^{{j_p}} _{i=1} A^{i-1} B u \in \varepsilon {\cal P}_{j_p, \ell_p}.
\end{equation}
\item The plant applies $u(k) = u^*$ for all $k \in [k_m, k_m + j_p )$. 
If $x(k_m + j_p ) \in {\cal P} _0$ then terminate the algorithm. Otherwise, set $k_{m+1} \leftarrow k_m + j_p $, $m \leftarrow m+1$, and go back to the step (2). \qedwhite
\end{enumerate}

Given the current state $x(k_m)$, $m\in \mathbb{N}$, Algorithm~2 starts by finding the smallest set containing $x(k_m)$, as shown in \req{smallest_set}. 
For a given pair $(j^*, \ell^*)$ obtained by \req{smallest_set}, we have $x(k_m) \in {\cal P}_{j^*, \ell^*}$ and $s_{j^*, \ell^*}$ is regarded as the current symbolic state in the composite transition system $TS$. The optimal path from $s_{j^*, \ell^*}$ to the terminal state $s_f$ is then found by applying a standard graph search algorithm (e.g., Dijkstra algorithm). 

Given the optimal path in \req{optimal_path}, the symbolic state $s_m (1)$, which is denoted as $s_{j_p, \ell_p}$ in Algorithm~2, indicates the next symbolic state that should be jumped from the current state $s_m (0) = s_{j^*, \ell^*}$. The corresponding set ${\cal P}_{j_p, \ell_p}$ indicates the next set to which the state trajectory should enter from ${\cal P}_{j^*, \ell^*}$. Since $(s_m (0), s_m(1)) = (s_{j^*, \ell^*}, s_{j_p, \ell_p}) \in \delta_s$, it holds that ${\cal P}_{j_p, \ell_p}\subset {\cal P}_{j^*, \ell^*}$ and there exists $u \in {\cal U}$ such that $A^{{j}_p} x(k_m) + \sum^{{j_p}} _{i=1} A^{i-1} B u \in {\cal P}_{j_p, \ell_p}$. 
Thus, we set $j_p$ as the inter-event time steps from $k_m$, i.e., the next update time $k_{m+1}$ is determined as $k_{m+1} = k_m + j_p$. The controller is then designed such that $x(k_{m+1}) \in {\cal P}_{j_p, \ell_p}$ by solving the linear programming problem as shown in \req{opt_controller}. 

The above procedure is repeated until the state achieves convergence to ${\cal P}_0$, and then switches to the second controller formulated in the next subsection. The following theorem is now concluded.
\begin{mythm}\label{stability_thm}
Consider the system \req{sys} subject to \req{constraint}, and that the proposed self-triggered strategy (Algorithm~2) is implemented. 
Then, any state trajectory starting from $x(k_0) \in {\cal P}_{\max}$, will enter the target set ${\cal P}_0$ in finite time. 
\end{mythm}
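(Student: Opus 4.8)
The plan is to argue by induction on the update index $m$ that Algorithm~2 is well-posed, then to show that the scaling factor of the smallest set containing the state strictly decreases at every update, and finally to invoke finiteness of the generated collection to force termination after finitely many updates. Throughout I write $a^{(m)} := a_{j^*,\ell^*}$ for the pair $(j^*,\ell^*)$ returned by \req{smallest_set} at time $k_m$, so that $x(k_m)\in {\cal P}_{j^*,\ell^*}$ is the \emph{smallest} generated set containing the state.

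First I would establish well-posedness. The base case holds because $x(k_0)\in{\cal P}_{\max}$ lies in at least one generated set, so \req{smallest_set} admits a minimizer. For the inductive step, assuming $x(k_m)$ lies in some generated set, \req{smallest_set} is feasible; the path \req{optimal_path} exists since from $s_{j^*,\ell^*}$ one can always follow the chain $s_{j^*,\ell^*}\to s_{j^*,\ell^*-1}\to\cdots\to s_{j^*,0}\to s_f$, and because every edge cost is non-negative, Dijkstra returns a finite minimal-cost path. The remaining point is feasibility of step (4), i.e. that some $u\in{\cal U}$ drives $x(k_m)$ into ${\cal P}_{j_p,\ell_p}$: this is exactly where I would invoke \rlem{convergence} together with the construction of \rdef{composition}, which guarantees for the edge $(s_{j^*,\ell^*},s_{j_p,\ell_p})\in\delta_s$ a $j_p$-step input keeping the state inside a generated set, closing the induction.

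Next I would prove the strict decrease. Whenever $x(k_m)\notin{\cal P}_0$ we have $a^{(m)}>1$, hence the minimizing index satisfies $\ell^*\geq 1$ (since $a_{j,0}=1$ for every $j$), so the first edge of the optimal path cannot be $(s_{j^*,\ell^*},s_f)$ and $s_m(1)=s_{j_p,\ell_p}$ is a genuine set. By the remark following \rdef{composition}, every transition obeys ${\cal P}_{j_p,\ell_p}\subset{\cal P}_{j^*,\ell^*}$, i.e. $a_{j_p,\ell_p}<a^{(m)}$. Since steps (4)--(5) place $x(k_{m+1})\in{\cal P}_{j_p,\ell_p}$, the smallest set containing $x(k_{m+1})$ satisfies $a^{(m+1)}\leq a_{j_p,\ell_p}<a^{(m)}$, so $\{a^{(m)}\}$ is strictly decreasing until the target set is reached.

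Finally I would close with finiteness. By \rrem{termination}, Algorithm~1 produces finitely many sets, so the values $\{a_{j,\ell}\}$ form a finite set bounded below by $1$; a strictly decreasing sequence in a finite set cannot be infinite, hence Algorithm~2 cannot run forever and must reach the termination condition $x(k_M)\in{\cal P}_0$ after finitely many updates $M$. As each inter-event interval satisfies $j_p\leq j_{\max}<\infty$, the total elapsed time is finite, which proves the claim. The main obstacle I anticipate is the feasibility of \req{opt_controller} under the strict bound $\varepsilon\in[0,1)$: landing merely \emph{in} ${\cal P}_{j_p,\ell_p}$ follows from \rlem{convergence}, but attaining $\varepsilon<1$ requires a scaling argument, writing $x(k_m)=\beta y$ with $y$ a point of ${\cal P}_{j_p,\ell_p+1}$ and $\beta=a^{(m)}/a_{j_p,\ell_p+1}\leq 1$ and using convexity of ${\cal U}$, with the borderline case $\beta=1$ (in particular for same-$j$ transitions) handled via the strict contraction inherited from $\lambda<1$.
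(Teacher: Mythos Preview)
Your proposal is correct and follows essentially the same route as the paper: show that the smallest generated set containing $x(k_m)$ strictly shrinks at every update via the transition-relation property ${\cal P}_{j_p,\ell_p}\subset{\cal P}_{j^*,\ell^*}$, then invoke \rrem{termination} (finiteness of the collection) to force termination. You are in fact more careful than the paper's own proof, which neither spells out well-posedness of each step nor addresses the strict constraint $\varepsilon\in[0,1)$ in \req{opt_controller} that you rightly flag as a potential obstacle.
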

\begin{proof}
Suppose at the initial time we have $x(k_0) \in {\cal P}_{\max}$, 
and let $(j^* _0, \ell^* _0)$ be the pair obtained as a solution to \req{smallest_set} obtained at $k_0$, i.e., $x(k_0) \in {\cal P}_{j^* _0, \ell^* _0}\subseteq {\cal P}_{\max}$. Moreover, let \req{optimal_path} be the optimal path and $s_0 (1) = s_{j_p, \ell_p} $ for some $j_p \in\{1, \cdots, j_{\max} \}$, $\ell_p \in \{0, \cdots, \ell_{j_p} \}$. By solving \req{opt_controller}, we obtain a controller such that $x(k_1) \in  {\cal P}_{j_p, \ell_p} \subset {\cal P}_{j^* _0, \ell^* _0}$. 

Now, let $(j^* _1, \ell^* _1)$ be the pair obtained as the solution to \req{smallest_set} at the next update time $k_{1}$. Since $x ( k_{1} ) \in {\cal P}_{j_p, \ell_p}$ and ${\cal P}_{j^* _1, \ell^* _1}$ represents the \textit{smallest} set containing $x(k_1)$, we obtain $x(k_1) \in {\cal P}_{j^* _1, \ell^* _1} \subseteq  {\cal P}_{j_p, \ell_p}$.
Therefore, it holds that ${\cal P}_{j^* _1, \ell^* _1} \subseteq {\cal P}_{k_p, \ell_p} \subset  {\cal P}_{j^* _0, \ell^* _0}$ and thus ${\cal P}_{j^* _1, \ell^* _1}\subset {\cal P}_{j^* _0, \ell^* _0}$. We also have $x(k_0) \in  {\cal P}_{j^* _0, \ell^* _0}$ and $x(k_1) \in  {\cal P}_{j^* _1, \ell^* _1}$. By following the same procedure given above, it holds that: 
\begin{equation}\label{decreasingp}
{\cal P}_{j^* _0, \ell^* _0} \supset  {\cal P}_{j^* _1, \ell^* _1} \supset  {\cal P}_{j^* _2, \ell^* _2} \supset \cdots,
\end{equation}
and we have $x (k_m) \in {\cal P}_{j^* _m, \ell^* _m} $ for all $m\in \mathbb{N}$. This implies that the set containing $x(k_m)$ gets strictly smaller (i.e., closer to the origin) as the time sequence grows. This follows, that there exists $k_{M}, M \in \mathbb{N}$ such that $x ( k_{M} ) \in  {\cal P}_{j^* _M, \ell^* _M } = {\cal P}_0$; 
if this were not the case, \req{decreasingp} would be iterated for infinite times, which would mean that the number of generated sets in \req{generated_sets} would be infinite. 
This contradicts the fact that there exists only a finite number of generated sets as per \rrem{termination}. 
This completes the proof. 
\end{proof}

\subsection{Stabilizing inside the target set}
When stabilizing inside the target set ${\cal P}_0$, a periodic controller is designed such that the inter-event time steps can be maximized. 
Whether there exists a stabilizing, $j$-step constant controller inside ${\cal P}_0$ can be easily checked a-priori by solving the following problem; 
\begin{equation}\label{opt_controller2}
\varepsilon^* = \underset{u_1, \cdots, u_N \in {\cal U}}{\rm min}\ \varepsilon
\end{equation}
subject to $\varepsilon \in [0, 1)$ and 
\begin{equation}
A^{j} v_n + \sum^{j} _{i=1} A^{i-1} B u_n \in \varepsilon {\cal P}_{0},\ \ \forall n\in \{1, \cdots, N\}. 
\end{equation}
If \req{opt_controller2} has a solution for $j\in\{1, \cdots, j_{\max} \}$, then it is easily shown similarly to \rlem{stability_lem}, that there exists a stabilizing, $j$-step constant control law $g_j : {\cal U} \rightarrow {\cal X}$ such that the gauge function $\Psi_{{\cal P}_0}$ is strictly decreasing, i.e., $\Psi_{{\cal P}_0} (A^{j} x + \sum^{j} _{i=1} A^{i-1} B g_j (x)) \leq  \varepsilon^* \Psi_{{\cal P}_0}  (x)$. Note that \req{opt_controller2} always has a solution for $j=1$, since ${\cal P}_0$ is assigned as the $\lambda$-contractive set (see \rsec{targ_assign_sec}). 

By checking the feasibility of \req{opt_controller2} for each $j \in \{1, \cdots, j_{\max} \}$, the maximal inter-event time steps to guarantee stability in ${\cal P}_0$ is obtained as:
\begin{equation}
\hat{j} = {\rm max} \left \{ j\in\{1,\cdots, j_{\max}\} : {\req{opt_controller2}\ {\rm is\ feasible}} \right \}. 
\end{equation}
Having obtained the maximal inter-event time steps $\hat{j}$ and by using a similar procedure to \req{opt_controller}, the following state feedback controller is designed for $x(k_m) \in {\cal P}_0$: 
\begin{equation}\label{stabilize_target}
u^* = \underset{u \in {\cal U}}{\rm argmin}\ \varepsilon
\end{equation}
subject to $\varepsilon \in [0,  1)$ and 
\begin{equation}
A^{\hat{j}} x(k_m) + \sum^{\hat{j}} _{i=1} A^{i-1} B u \in \varepsilon {\cal P}_{0}. 
\end{equation}
Note that instead of solving \req{stabilize_target}, other control schemes may be used to design a state feedback controller, such as linear variable structure control, see e.g., \cite{blanchini1994a}. 
\section{Illustrative example}
Consider again the example of the double integrator system described in \rsec{enlarge_sec} (Example 1). The resulting composite transition system $TS$ consists of $542$ symbolic states and $1052$ transitions. 
For simplicity reasons, only Algorithm~2 (self-triggered strategy) is implemented to verify \rthm{stability_thm}. 
\rfig{state_traj} plots the state trajectory by applying Algorithm~2 from the initial point $x (k_0) = [0\ ; -4]$ with the weight $(p, q) = (1,1)$ in the cost function \req{cost_composite}, and \rfig{triggering} indicates the corresponding control input. The average inter-event time step is obtained by $11.0$ (step). 
From these figures, it is shown that the state trajectory enters the target set by aperiodically executing control tasks according to Algorithm~2. 

To analyze the effect of weights $(p, q)$, we also implement Algorithm~2 with the two cases $(p, q) = (1,1), (1, 10)$ and then evaluate the control performance and the inter-event time steps. In both cases, Algotirhm~2 is conducted for $1000$ times, where for each time the initial state is randomly selected satisfying $x(k_0) \in {\cal P}_{\max} \backslash {\cal P}_0$. In \rtab{weight_result}, we illustrate the resulting average inter-event time steps and the average time steps required to converge to the target set. From the table, the average inter-event time steps become larger for the case $(1, 10)$ than for the case $(1,1)$. This is due to the fact from \req{cost_composite}, that the cost for the inter-event time steps is more penalized as $q$ is largely selected. On the other hand, the convergence time steps for the case $(1,10)$ become longer than for the case $(1,1)$, which implies that a better control performance is achieved when $(1,1)$. Therefore, it is shown that there exists a trade-off between achieving the control performance and the inter-event time steps, and this trade-off can be regulated by appropriately tuning these parameters.

\begin{figure}[t]
   \centering
    \subfigure[State trajectory (blue solid line) from the initial state ${x(k_0)=[0; -4]}$ (red cross mark) and the domain of attraction (black solid line).]
      {\includegraphics[width=7.5cm]{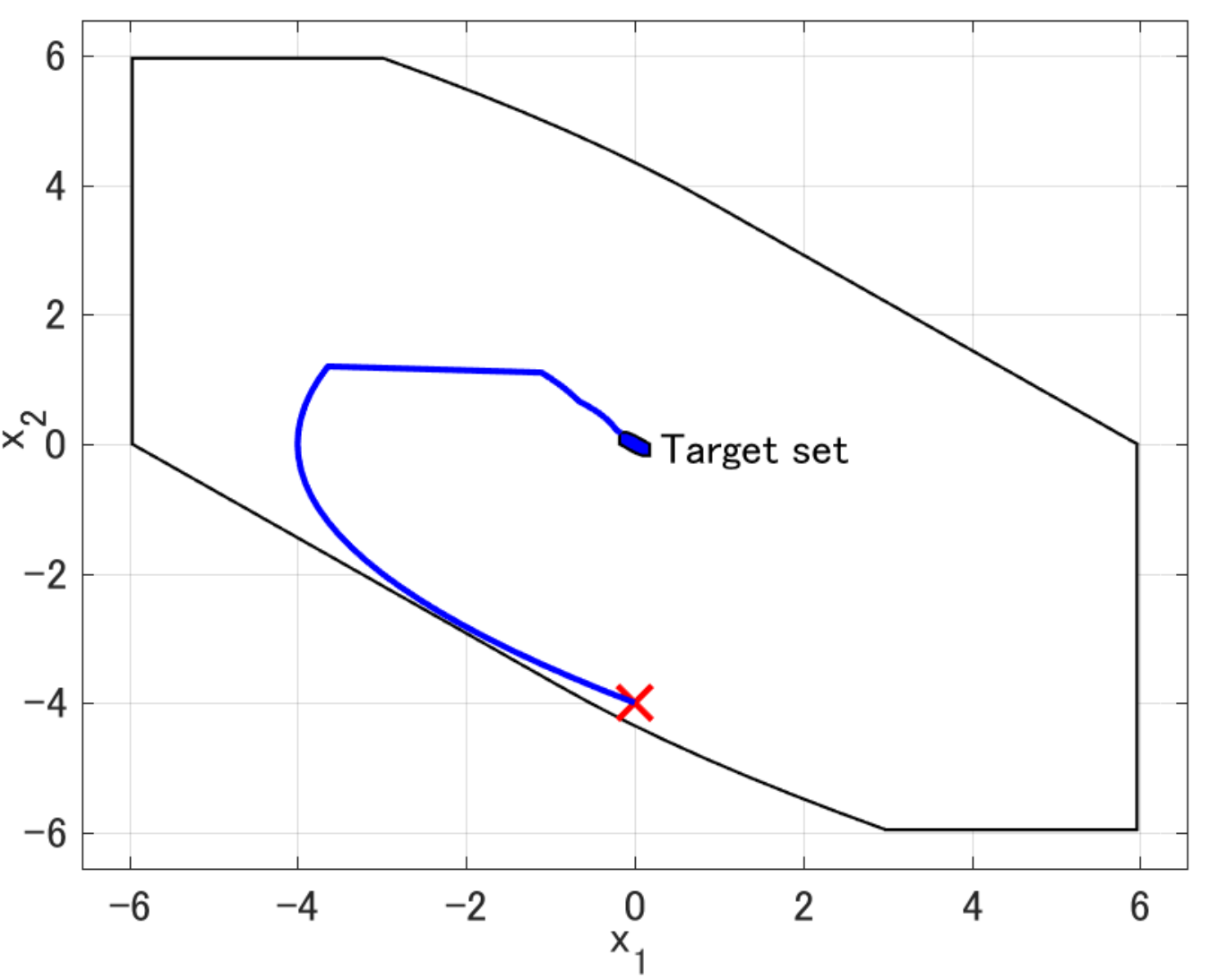}\label{state_traj}}
    \subfigure[Applied control input $u(k)$.]
      {\includegraphics[width=7.5cm]{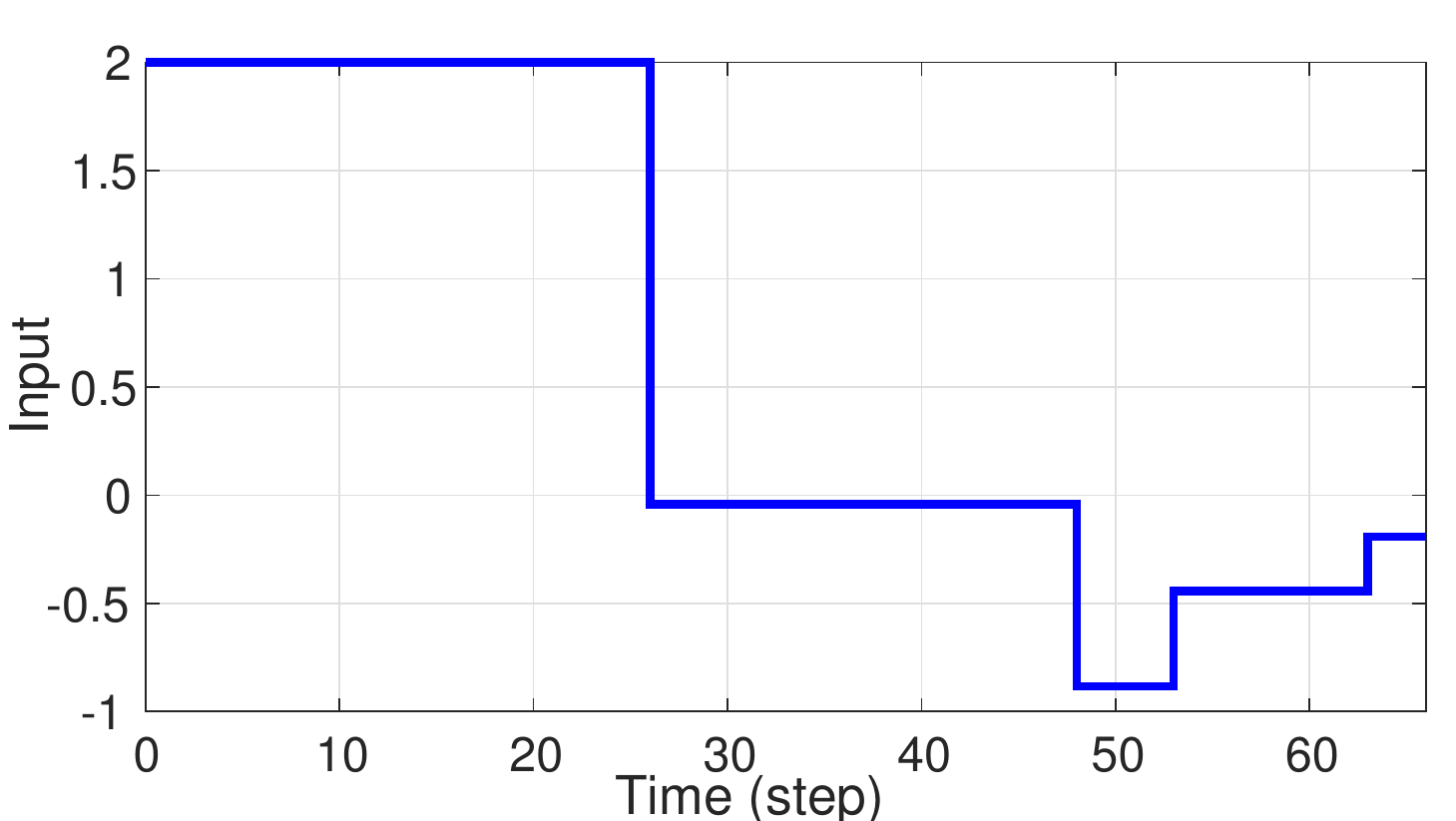}\label{triggering}}
    \vspace{-5pt}
    \caption{State trajectories and the corresponding control input obtained by Algorithm~2.}
 \end{figure}
\begin{table}[h]
\begin{center}
\caption{Average inter-event time steps and convergence time steps}\label{weight_result}
\begin{tabular}{|c||c|c|} \hline
$(p,q)$   & Inter-event (steps)  & Convergence (steps) \\ \hline
$(1,1)$      & 14.7       &  55.6  \\ \hline
$(1, 10)$  &   23.6       &  78.1 \\ \hline
\end{tabular}
\end{center}
\end{table}

\balance
\section{Conclusions and Future work}
In this paper, we have proposed a self-triggered strategy for constrained discrete time systems. The proposed scheme is to construct a collection of polyhedral contractive sets through vertex operations, and then translate them into the corresponding {transition systems}, consisting of symbolic states and transitions to represent the system's behavior. 
In the self-triggered strategy, inter-event time steps are determined by applying a shortest path algorithm for each update time. The proposed scheme was also validated by an illustrative example. 

In the off-line phase, it is required in Algorithm~1 that a collection of polyhedral sets needs to be computed for all $j\in \{1, \cdots, j_{\max}\}$. Thus, the computational complexity may become high as the state dimension $n$ increases. Thus, it is important in our future work, to analyse the computational complexity of the proposed scheme. Also, future work involves extending the proposed framework to the case of random packet dropouts or network delays.


\appendix 
(\textit{Proof of \rlem{convergence}}):
Suppose that $x \in {\cal P}_{j, \ell}$ with $j\in \{1, \cdots, j_{\max}\}$, $\ell \in \{ 1, \cdots, \ell_j \}$. Since $x \in {\cal P}_{j, \ell}$, there exist $\mu_n \in [0, 1]$, $n\in \{1, \cdots, N \}$ such that $x = \sum^N _{n=1} \mu_n (a_{j, \ell } v_n)$ with  $\sum^N _{n=1} \mu_n = 1$. 
Furthermore, from \req{constx} in Algorithm~1, there exist $u_n \in {\cal U}$, $n\in \{1, \cdots, N \}$ such that $A^j a_{j, \ell} v_n + \sum^{j-1} _{i=1} A^{i-1} B u_n \in {\cal P}_{j,\ell-1}$. Set $u = \sum^N _{n=1} \mu_n u_n \in {\cal U}$. Then we obtain $x' = A^j x + \sum^{j-1} _{i=1} A^{i-1} B u = \sum^N _{n=1} \mu_n \left ( A^j a_{j, \ell} v_n + \sum^{j-1} _{i=1} A^{i-1} B u_n \right ) \in {\cal P}_{j, \ell-1}$. Therefore, there exists $u \in {\cal U}$ such that $x' \in {\cal P}_{j, \ell-1}$. This completes the proof. 
\end{document}